\newtheorem{theorem}{Theorem}[section]
\newtheorem{lemma}[theorem]{Lemma}
\newtheorem{proposition}[theorem]{Proposition}
\theoremstyle{definition}
\newtheorem{definition}[theorem]{Definition}
\theoremstyle{remark}
\newtheorem{remark}[theorem]{Remark}
\begin{document}

\title[Continuous bounded functions with separable support]{The Banach algebra of continuous bounded functions with separable support}

\author{M. R. Koushesh}
\address{Department of Mathematical Sciences, Isfahan University of Technology, Isfahan 84156--83111, Iran}
\address{School of Mathematics, Institute for Research in Fundamental Sciences (IPM), P.O. Box: 19395--5746, Tehran, Iran}
\email{koushesh@cc.iut.ac.ir}
\thanks{This research was in part supported by a grant from IPM (No. 90030052).}

\subjclass[2010]{Primary 46J10, 46J25, 46E25, 46E15; Secondary 54C35, 54D35, 46H05, 16S60}

%\date{October 17, 2011 and, in revised form, .}

\keywords{Gelfand theory; Stone--\v{C}ech compactification; Real Banach algebra; Separable support; Locally separable metrizable space; Spectrum; Functions vanishing at infinity; Functions with compact support.}

\begin{abstract}
We prove a commutative Gelfand--Naimark type theorem, by showing that the set $C_s(X)$ of continuous bounded (real or complex valued) functions with separable support on a locally separable metrizable space $X$ (provided with the supremum norm) is a Banach algebra, isometrically isomorphic to  $C_0(Y)$, for some unique (up to homeomorphism) locally compact Hausdorff space $Y$. The space $Y$, which we explicitly construct as a subspace of the Stone--\v{C}ech compactification of $X$, is countably compact, and if $X$ is non--separable, is moreover non--normal; in addition $C_0(Y)=C_{00}(Y)$. When the underlying field of scalars is the complex numbers, the space $Y$ coincides with the spectrum of the $\mbox{C}^*$--algebra $C_s(X)$. Further, we find the dimension of the algebra $C_s(X)$.
\end{abstract}

\maketitle

\section{Introduction}

Throughout this article the underlying field of scalars (which is fixed throughout each discussion) is assumed to be either the real field $\mathbb{R}$ or the complex field $\mathbb{C}$, unless specifically stated otherwise. All spaces considered are assumed to be Hausdorff. Let $X$ be a completely regular space. Denote by $C_b(X)$ the set of all continuous bounded functions on $X$. If $f\in C_b(X)$, the {\em zero--set} of $f$, denoted by $\mbox{Z}(f)$, is $f^{-1}(0)$, the {\em cozero--set} of $f$, denoted by $\mbox{Coz}(f)$, is $X\backslash \mbox{Z}(f)$, and the {\em support} of $f$, denoted by $\mbox{supp}(f)$, is $\mbox{cl}_X \mbox{Coz}(f)$. Let \[\mbox{Coz}(X)=\big\{\mbox{Coz}(f):f\in C_b(X)\big\}.\]
The elements of $\mbox{Coz}(X)$ are called {\em cozero--sets} of $X$. Denote by $C_0(X)$ the set of all $f\in C_b(X)$ which vanish at infinity (i.e., $|f|^{-1}([\epsilon,\infty))$ is compact for each $\epsilon>0$) and denote by $C_{00}(X)$ the set of all $f\in C_b(X)$ with compact support.

The purpose of this article is to show that the set $C_s(X)$ of continuous bounded (real or complex valued) functions with separable support on a locally separable metrizable space $X$ (provided with the supremum norm) is a Banach algebra, which is isometrically isomorphic to the Banach algebra $C_0(Y)$, for some unique (up to homeomorphism) locally compact space $Y$. (This is a direct conclusion of the commutative Gelfand--Naimark Theorem when the underlying field of scalars is $\mathbb{C}$, provided that one assumes $C_s(X)$ to be a Banach algebra.) The space $Y$, which  is explicitly constructed as a subspace of the Stone--\v{C}ech compactification of $X$, is shown to be countably compact and non--normal (for the latter, provided that $X$ is non--separable) and moreover $C_0(Y)=C_{00}(Y)$. In the case when the underlying field of scalars is $\mathbb{C}$, the space $Y$ coincides with the spectrum of the $\mbox{C}^*$--algebra $C_s(X)$. Further, the dimension of the algebra $C_s(X)$ is found in terms of the density of $X$.

We now review briefly some known facts from the theories of the Stone--\v{C}ech compactification and metrizable spaces. Additional information on these subjects may be found in \cite{E} and \cite{GJ}.

\subsection*{1.1. The Stone--\v{C}ech compactification.} Let $X$ be a completely regular space. The {\em Stone--\v{C}ech compactification} $\beta X$ of $X$ is the compactification of $X$ which is characterized among all compactifications of $X$ by the following property: Every continuous $f:X\rightarrow K$, where $K$ is a compact space, is continuously extendable over $\beta X$; denote by $f_\beta$ this continuous extension of $f$. Use will be made in what follows of the following properties of $\beta X$. (See Sections 3.5 and 3.6 of \cite{E}.)
\begin{itemize}
  \item $X$ is locally compact if and only if $X$ is open in $\beta X$.
  \item Any open--closed subspace of $X$ has open--closed closure in $\beta X$.
  \item If $X\subseteq T\subseteq\beta X$ then $\beta T=\beta X$.
  \item If $X$ is normal then $\beta T=\mbox{cl}_{\beta X}T$ for any closed subspace $T$ of $X$.
\end{itemize}

\subsection*{1.2. Separability and local separability in metrizable spaces.} The {\em density} of a space $X$, denoted by $d(X)$, is the smallest cardinal number of the form $|D|$, where $D$ is dense in $X$. Therefore, a space $X$ is separable if $d(X)\leq\aleph_0$. Note that in any metrizable space the three notions of separability, being Lindel\"{o}f, and second countability coincide; thus any subspace of a separable metrizable space is separable.
A space is called {\em locally separable} if each of its points has a separable open neighborhood. By a theorem of Alexandroff, any locally separable metrizable space $X$ can be represented as a disjoint union
\[X=\bigcup_{i\in I}X_i,\]
where $I$ is an index set, and $X_i$ is a non--empty separable open--closed subspace of $X$ for each $i\in I$. (See Problem 4.4.F of \cite{E}.) Note that $d(X)=|I|$, provided that $I$ is an infinite set.

\section{The Banach algebra $C_s(X)$ of continuous bounded functions with separable support on a locally separable metrizable space $X$}

\begin{definition}
For any metrizable space $X$ let
\[C_s(X)=\big\{f\in C_b(X):\mbox{supp}(f)\mbox{ is separable}\big\}.\]
\end{definition}

Recall that any subspace of a separable metrizable space is separable. Also, note that any metrizable space $X$ is completely regular, that is, if $x\in X$ and $U$ is an  open neighborhood of $x$ in $X$, then there exists a continuous $f:X\rightarrow [0,1]$ such that $f(x)=1$ and $f|(X\backslash U)\equiv 0$.

\begin{proposition}\label{YHP}
Let $X$ be a metrizable space. Then $C_s(X)$ is a closed subalgebra of $C_b(X)$. Furthermore, if $X$ is locally separable, then $C_s(X)$ is unital if and only if $X$ is separable.
\end{proposition}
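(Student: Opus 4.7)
The plan splits into three independent bookkeeping tasks: verify the subalgebra axioms, verify closedness in the supremum norm, and pin down the identity element of $C_s(X)$ when it exists. The underlying metric on $X$ is what makes all of the support calculations behave, via the two facts (both noted in Section 1.2) that every subspace of a separable metrizable space is separable and that the closure of a separable subset of a metric space is separable.

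For the subalgebra structure I would exploit the set--theoretic inclusions $\mbox{Coz}(f+g)\subseteq\mbox{Coz}(f)\cup\mbox{Coz}(g)$ and $\mbox{Coz}(fg)=\mbox{Coz}(f)\cap\mbox{Coz}(g)$ to obtain $\mbox{supp}(f+g)\subseteq\mbox{supp}(f)\cup\mbox{supp}(g)$ and $\mbox{supp}(fg)\subseteq\mbox{supp}(f)$ after taking closures in $X$; the two facts above then keep both supports separable, and closure under scalar multiplication is immediate. For closedness, suppose $f_n\to f$ uniformly with each $f_n\in C_s(X)$. If $|f(x)|>\epsilon>0$ then $|f_n(x)|>\epsilon/2$ for all large $n$, so $\mbox{Coz}(f)\subseteq\bigcup_{n\in\mathbb{N}}\mbox{Coz}(f_n)$ and hence $\mbox{supp}(f)\subseteq\mbox{cl}_X\bigcup_{n\in\mathbb{N}}\mbox{supp}(f_n)$. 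A countable union of separable metrizable subspaces of $X$ is separable, and its closure is separable as well, so $\mbox{supp}(f)$ sits inside a separable subspace of $X$ and is therefore separable.

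For the unital statement, if $X$ is separable then the constant function $1$ has support $X$, which is separable, so it lies in $C_s(X)$ and is evidently an identity. For the converse, assume $X$ is locally separable and $e\in C_s(X)$ is an identity. Given $x\in X$, I would use local separability to fix a separable open neighborhood $U$ of $x$, and then use complete regularity (noted just above the proposition) to pick a continuous $f:X\to[0,1]$ with $f(x)=1$ and $f|(X\backslash U)\equiv 0$; since $\mbox{supp}(f)\subseteq\mbox{cl}_X U$ is the closure of a separable set in the metric space $X$, $f\in C_s(X)$. Evaluating $ef=f$ at $x$ forces $e(x)=1$, so $e$ is the constant function $1$ on $X$ and therefore $X=\mbox{supp}(e)$ is separable.

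No individual step looks hard; the proof is really support bookkeeping, and the only thing that could go subtly wrong is losing track of where each hypothesis enters. Metrizability is what propagates separability through finite and countable unions and through closures, and local separability is exactly what guarantees that the bump functions used to probe the identity element actually belong to $C_s(X)$; without it the implication ``unital $\Rightarrow$ separable'' would fail, since one could not produce enough test functions in $C_s(X)$ to separate points and pin $e$ down to $1$.
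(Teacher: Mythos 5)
Your proposal is correct and follows essentially the same route as the paper: the same cozero-set inclusions for sums and products, the same countable-union-of-cozero-sets argument for closedness under uniform limits, and the same bump-function probe (via local separability plus complete regularity) to force the unit to be the constant function $\mathbf{1}$. The only quibble is your closing remark that the implication ``unital $\Rightarrow$ separable'' \emph{would fail} without local separability --- the hypothesis is used in the proof, but that alone does not show the implication is false for general metrizable $X$; this is side commentary, though, and does not affect the proof.
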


\begin{proof}
To show that $C_s(X)$ is a subalgebra of $C_b(X)$, let $f,g\in C_s(X)$. Note that
\[\mbox{Coz}(f+g)\subseteq \mbox{Coz}(f)\cup \mbox{Coz}(g),\]
and $\mbox{Coz}(f)\cup \mbox{Coz}(g)$ is separable, as it is contained in $\mbox{supp}(f)\cup\mbox{supp}(g)$ and the latter is so. Thus $\mbox{Coz}(f+g)$ is separable, and then so is its closure $\mbox{supp}(f+g)$ in $X$. That is $f+g\in C_s(X)$. Similarly, $fg\in C_s(X)$.

To show that $C_s(X)$ is closed in $C_b(X)$, let $f_1,f_2,\ldots $ be a sequence in $C_s(X)$ converging to some $f\in C_b(X)$. Note that
\[\mbox{Coz}(f)\subseteq\bigcup_{n=1}^\infty \mbox{Coz}(f_n)=C\]
and $\mbox{Coz}(f_n)$ is separable for each $n$, as it is contained in $\mbox{supp}(f_n)$. Thus the countable union $C$ is also separable. But then the subspace $\mbox{Coz}(f)$ of $C$ is separable, and thus so is its closure $\mbox{supp}(f)$ in $X$. Therefore $f\in C_s(X)$.

It is obvious that if $X$ is separable, then $C_s(X)$ is unital with the unit element $\mathbf{1}$ (the function which maps every element of $X$ to $1$). Now suppose that $X$ is locally separable and that $C_s(X)$ is unital. We show that $X$ is separable. Let $u$ be the unit element of $C_s(X)$. Let $x\in X$. Since $X$ is locally separable, there exist a separable open neighborhood $U_x$ of $x$ in $X$. Let $f_x:X\rightarrow [0,1]$ be continuous with $f_x(x)=1$ and $f_x|(X\backslash U_x)\equiv 0$. Since $\mbox{Coz}(f_x)$ is separable (as $\mbox{Coz}(f_x)\subseteq U_x$), so is its closure $\mbox{supp}(f_x)$ in $X$, and thus $f_x\in C_s(X)$. But then $u(x).f_x(x)=f_x(x)$ implies that $u(x)=1$. Therefore $u=\mathbf{1}$, and thus $X=\mbox{supp}(u)$ is separable.
\end{proof}

The following subspace of $\beta X$ will play a crucial role in our study.

\begin{definition}\label{HGA}
For  any metrizable space $X$ let
\[\lambda X=\bigcup\big\{\mbox{int}_{\beta X} \mbox{cl}_{\beta X}C:C\in \mbox{Coz}(X)\mbox{ is separable}\big\}.\]
\end{definition}

Observe that $\lambda X$ coincides with $\lambda_\mathcal{P} X$, as defined in \cite{Ko3} (also, in \cite{Ko4} and \cite{Ko5}), with $\mathcal{P}$ taken to be separability (provided that $X$ is metrizable).

Note that if $X$ is a space and $D$ is a dense subspace of $X$, then $\mbox{cl}_XU=\mbox{cl}_X(U\cap D)$ for every open subspace $U$ of $X$; this simple observation will be used below.

\begin{lemma}\label{JHG}
Let $X$ be a metrizable space. Then $X$ is locally separable if and only if $X\subseteq\lambda X$.
\end{lemma}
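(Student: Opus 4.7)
The plan is to prove each implication separately, using the Alexandroff decomposition for the forward direction and the dense-subspace observation stated just before the lemma for the reverse.

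For the forward direction, suppose $X$ is locally separable and fix $x\in X$. By the theorem of Alexandroff recalled in Section 1.2, write $X=\bigcup_{i\in I}X_i$ as a disjoint union of non-empty separable open-closed subspaces, and let $x\in X_{i_0}$. Since $X_{i_0}$ is open-closed in $X$, its characteristic function is continuous, so $X_{i_0}\in\mathrm{Coz}(X)$; and $X_{i_0}$ is separable. The second bulleted property of $\beta X$ gives that $\mathrm{cl}_{\beta X}X_{i_0}$ is open-closed in $\beta X$, so $\mathrm{int}_{\beta X}\mathrm{cl}_{\beta X}X_{i_0}=\mathrm{cl}_{\beta X}X_{i_0}\supseteq X_{i_0}\ni x$. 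Hence $x\in\lambda X$.

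For the reverse direction, suppose $X\subseteq\lambda X$ and fix $x\in X$. Choose a separable $C\in\mathrm{Coz}(X)$ with $x\in V:=\mathrm{int}_{\beta X}\mathrm{cl}_{\beta X}C$. Then $V\cap X$ is an open neighborhood of $x$ in $X$, and it suffices to show that $V\cap X$ is separable. Since $V\subseteq\mathrm{cl}_{\beta X}C$ is open in $\beta X$, and $X$ is dense in $\beta X$, the observation preceding the lemma applied twice gives
\[
\mathrm{cl}_{\beta X}V=\mathrm{cl}_{\beta X}(V\cap C),
\]
because $C$ is dense in $\mathrm{cl}_{\beta X}C$ and $V$ is an open subset of $\mathrm{cl}_{\beta X}C$. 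Intersecting with $X$ yields $V\cap X\subseteq\mathrm{cl}_X(V\cap C)$. Since $V\cap C$ is a subspace of the separable metrizable space $C$, it is separable; and in a metrizable space the closure of a separable set is separable, so $\mathrm{cl}_X(V\cap C)$ is separable. Therefore $V\cap X$ is separable, which shows $X$ is locally separable.

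I expect the forward direction to be immediate once the Alexandroff decomposition and the open-closed property of $\beta X$ are invoked. The main subtlety is in the reverse direction: one must pass from the purely $\beta X$-level information ``$x\in\mathrm{int}_{\beta X}\mathrm{cl}_{\beta X}C$'' back to a separable open neighborhood \emph{inside} $X$. The key is the identity $\mathrm{cl}_{\beta X}V=\mathrm{cl}_{\beta X}(V\cap C)$, which trades the $\beta X$-interior for a closure of a subset of the originally given separable cozero-set, so that metrizability can finish the job.
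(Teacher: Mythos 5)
Your proof is correct, but the forward direction takes a genuinely different route from the paper's. The paper argues pointwise from the definition of local separability: given $x$ with a separable open neighborhood $U$, it builds a Urysohn-type function $f$ with $f(x)=0$ and $f|(X\backslash U)\equiv 1$, takes the separable cozero-set $C=f^{-1}([0,1/2))$, and uses the extension $f_\beta$ to exhibit the open set $f_\beta^{-1}([0,1/2))$ squeezed between $x$ and $\mbox{cl}_{\beta X}C$, which forces $x\in\mbox{int}_{\beta X}\mbox{cl}_{\beta X}C$. You instead invoke the Alexandroff decomposition $X=\bigcup_{i\in I}X_i$ into separable open--closed pieces, note that $X_{i_0}$ is a cozero-set whose closure in $\beta X$ is open--closed, and so the interior operation is vacuous. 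Your version is shorter and avoids the $f_\beta$ computation entirely, at the cost of leaning on Alexandroff's (nontrivial) decomposition theorem, which the paper does cite and use elsewhere (Lemma \ref{PDS}); the paper's pointwise construction is more self-contained and adapts more directly to the variants mentioned in the remark after Theorem \ref{TRES}, where one replaces separability by other local properties for which a global open--closed decomposition is not the natural starting point. In the reverse direction you and the paper do essentially the same thing, except that your identity $\mbox{cl}_{\beta X}V=\mbox{cl}_{\beta X}(V\cap C)$ is an unnecessary detour: since $V\subseteq\mbox{cl}_{\beta X}C$ already, one gets $V\cap X\subseteq\mbox{cl}_{\beta X}C\cap X=\mbox{cl}_XC$ immediately, and $\mbox{cl}_XC$ is separable because $C$ is. Your extra step is correct, just not needed.
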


\begin{proof}
Suppose that $X$ is locally separable. Let $x\in X$, and let $U$ be a separable open neighborhood of $x$ in $X$. Let $f:X\rightarrow[0,1]$ be continuous with $f(x)=0$ and $f|(X\backslash U)\equiv 1$. Let $C=f^{-1}([0,1/2))$, then $C\in \mbox{Coz}(X)$. (To see the latter, define
\[g=\max\big\{0,1/2-f\big\}\]
and then observe that $C=\mbox{Coz}(g)$.) Note that $C$ is separable, as $C\subseteq U$. Since
\begin{eqnarray*}
f_\beta^{-1}\big([0,1/2)\big)&\subseteq&\mbox{cl}_{\beta X}f^{-1}_\beta\big([0,1/2)\big)\\&=&\mbox{cl}_{\beta X}\big(X\cap f_\beta^{-1}\big([0,1/2)\big)\big)=\mbox{cl}_{\beta X}f^{-1}\big([0,1/2)\big)=\mbox{cl}_{\beta X}C
\end{eqnarray*}
it follows that
\[x\in f_\beta^{-1}\big([0,1/2)\big)\subseteq\mbox{int}_{\beta X}\mbox{cl}_{\beta X}C\subseteq\lambda X.\]

Now, suppose that $X\subseteq\lambda X$. Let $x\in X$. Then $x\in \lambda X$, which implies that $x\in \mbox{int}_{\beta X}\mbox{cl}_{\beta X}D$ for some separable $D\in \mbox{Coz}(X)$. Let
\[V=X\cap\mbox{int}_{\beta X}\mbox{cl}_{\beta X}D.\]
Then $V$ is an open neighborhood of $x$ in $X$, and it is separable, as
\[V\subseteq X\cap\mbox{cl}_{\beta X}D=\mbox{cl}_X D,\]
and the latter is so.
\end{proof}

\begin{definition}\label{WWA}
Let $X$ be a locally separable metrizable space. For any $f\in C_b(X)$ denote $f_\lambda=f_\beta|\lambda X$.
\end{definition}

Note that by Lemma \ref{JHG} the function $f_\lambda$ extends $f$.

\begin{lemma}\label{TES}
Let $X$ be a locally separable metrizable space. For any $f\in C_b(X)$ the following are equivalent:
\begin{itemize}
\item[\rm(1)] $f\in C_s(X)$.
\item[\rm(2)] $f_\lambda\in C_0(\lambda X)$.
\end{itemize}
\end{lemma}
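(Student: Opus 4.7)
My plan is to prove the equivalence by translating between $\beta X$- and $\lambda X$-information through Definition~\ref{HGA}: a point of $\beta X$ lies in $\lambda X$ exactly when it has a neighborhood of the form $\mbox{int}_{\beta X}\mbox{cl}_{\beta X}C$ for some separable $C\in\mbox{Coz}(X)$. The identity $f_\lambda=f_\beta|\lambda X$ then lets me freely replace sublevel sets of $f_\lambda$ by sublevel sets of $f_\beta$ intersected with $\lambda X$.

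For (1)$\Rightarrow$(2) I fix $\epsilon>0$ and set $K_\epsilon=|f_\beta|^{-1}([\epsilon,\infty))$, which is automatically compact as a closed subset of $\beta X$. It therefore suffices to show $K_\epsilon\subseteq\lambda X$, since then $K_\epsilon=|f_\lambda|^{-1}([\epsilon,\infty))$ is compact in $\lambda X$. To this end I would imitate the device in the proof of Lemma~\ref{JHG}: put $C=|f|^{-1}((\epsilon/2,\infty))$, which equals $\mbox{Coz}(g)$ for $g=\max\{0,|f|-\epsilon/2\}$, and observe that $C\subseteq\mbox{Coz}(f)\subseteq\mbox{supp}(f)$ is separable. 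Since $|f_\beta|^{-1}((\epsilon/2,\infty))$ is open in $\beta X$ with trace $C$ on the dense subset $X$, the observation preceding Lemma~\ref{JHG} gives
\[|f_\beta|^{-1}\big((\epsilon/2,\infty)\big)\subseteq\mbox{cl}_{\beta X}C,\]
and being open, it lies in $\mbox{int}_{\beta X}\mbox{cl}_{\beta X}C\subseteq\lambda X$. Thus $K_\epsilon\subseteq\lambda X$, as required.

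For (2)$\Rightarrow$(1) I would exhaust $\mbox{Coz}(f)$ by the sets $E_n=\{x\in X:|f(x)|\geq 1/n\}$, $n\geq 1$. Each $E_n$ lies inside $K_n=|f_\lambda|^{-1}([1/n,\infty))$, which is compact in $\lambda X$ by hypothesis; Definition~\ref{HGA} supplies an open cover of $K_n$ by sets $\mbox{int}_{\beta X}\mbox{cl}_{\beta X}C$ with $C$ separable, and compactness extracts a finite subcover $C_1,\ldots,C_k$. Then
\[E_n\subseteq X\cap K_n\subseteq\bigcup_{i=1}^{k}\big(X\cap\mbox{cl}_{\beta X}C_i\big)=\bigcup_{i=1}^{k}\mbox{cl}_XC_i,\]
which is a finite union of separable subspaces of $X$, hence separable. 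Consequently $\mbox{Coz}(f)=\bigcup_nE_n$ is separable (countable union), and therefore so is its closure $\mbox{supp}(f)$ in the metrizable space $X$.

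I expect the principal difficulty to be the first direction, where a set-theoretic separability hypothesis on $\mbox{supp}(f)\subseteq X$ must be lifted to the topological containment $K_\epsilon\subseteq\lambda X$ inside $\beta X$; the cozero-set trick from Lemma~\ref{JHG} is the key ingredient that bridges these. The reverse direction is more routine, relying only on compactness of each $K_n$ together with the definition of $\lambda X$ and the standard identity $\mbox{cl}_XC_i=X\cap\mbox{cl}_{\beta X}C_i$.
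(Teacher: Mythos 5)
Your proof is correct and follows essentially the same route as the paper's: both directions hinge on the observation that $\mathrm{cl}_{\beta X}U=\mathrm{cl}_{\beta X}(U\cap X)$ for $U$ open in $\beta X$, used to place the relevant sublevel sets of $f_\beta$ inside $\lambda X$, and the reverse direction is the paper's argument verbatim. The only (cosmetic) difference is that in (1)$\Rightarrow$(2) you use the separable cozero--set $|f|^{-1}((\epsilon/2,\infty))$ for each $\epsilon$, whereas the paper uses $\mathrm{Coz}(f)$ once to show $\mathrm{Coz}(f_\beta)\subseteq\lambda X$ and deduces all cases at a stroke.
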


\begin{proof}
(1) {\em  implies} (2). Note that $\mbox{Coz}(f)$, being a subspace of $\mbox{supp}(f)$, is separable. Now
\[\mbox{Coz}(f_\beta)\subseteq\mbox{cl}_{\beta X}\mbox{Coz}(f_\beta)=\mbox{cl}_{\beta X}\big(X\cap \mbox{Coz}(f_\beta)\big)=\mbox{cl}_{\beta X}\mbox{Coz}(f)\]
and thus
\[\mbox{Coz}(f_\beta)\subseteq\mbox{int}_{\beta X}\mbox{cl}_{\beta X}\mbox{Coz}(f)\subseteq\lambda X.\]
For any $\epsilon>0$, the space
\[|f_\lambda|^{-1}\big([\epsilon,\infty)\big)=|f_\beta|^{-1}\big([\epsilon,\infty)\big),\]
being closed in $\beta X$, is compact.

(2) {\em  implies} (1). Let $n$ be a positive integer. Since $|f_\lambda|^{-1}([1/n,\infty))$ is a compact subspace of $\lambda X$, we have
\begin{equation}\label{JB}
|f_\lambda|^{-1}\big([1/n,\infty)\big)\subseteq\mbox{int}_{\beta X}\mbox{cl}_{\beta X}C_1\cup\cdots\cup\mbox{int}_{\beta X}\mbox{cl}_{\beta X}C_k
\end{equation}
for some separable $C_1,\ldots,C_k\in \mbox{Coz}(X)$. Intersecting both sides of (\ref{JB}) with $X$, it follows that $|f|^{-1}([1/n,\infty))$, being a subspace of the separable metrizable space
\[\mbox{cl}_XC_1\cup\cdots\cup\mbox{cl}_XC_k,\]
is separable. But then
\[\mbox{Coz}(f)=\bigcup_{n=1}^\infty |f|^{-1}\big([1/n,\infty)\big)\]
is also separable, and thus so is its closure $\mbox{supp}(f)$ in $X$.
\end{proof}

Observe that any open--closed subspace $A$ of a space $X$ is a cozero--set of $X$; indeed, $A=\mbox{Coz}(f)$, where $f=\chi_A$ is the characteristic function of $A$.

\begin{lemma}\label{PDS}
Let $X$ be a locally separable metrizable space. Let $X$ be represented as a disjoint union $X=\bigcup_{i\in I}X_i$, such that $X_i$ is a separable open--closed subspace of $X$ for each $i\in I$. Then
\[\lambda X=\bigcup\Big\{\mbox{\em cl}_{\beta X}\Big(\bigcup_{i\in J}X_i\Big):J\subseteq I\mbox{ is countable}\Big\}.\]
\end{lemma}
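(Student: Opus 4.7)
The plan is to prove both inclusions by exploiting the fact that each $X_i$ is open--closed (so the $X_i$ with $i \in J$ assemble into a separable open--closed, hence cozero, subspace whenever $J$ is countable), together with the listed property that open--closed subspaces of $X$ have open--closed closures in $\beta X$.

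For the inclusion $\supseteq$, fix a countable $J\subseteq I$ and let $A=\bigcup_{i\in J}X_i$. Since the collection $\{X_i\}_{i\in I}$ is disjoint and each $X_i$ is open--closed, $A$ is open--closed in $X$, so $A\in\mbox{Coz}(X)$ (via its characteristic function, as noted just before the lemma). Being a countable union of separable subspaces of the metric space $X$, $A$ is separable. Furthermore, $\mbox{cl}_{\beta X}A$ is open--closed in $\beta X$ by the property cited from \cite{E}, so $\mbox{cl}_{\beta X}A=\mbox{int}_{\beta X}\mbox{cl}_{\beta X}A$. By Definition \ref{HGA} we conclude $\mbox{cl}_{\beta X}A\subseteq\lambda X$.

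For the inclusion $\subseteq$, take $x\in\lambda X$ and pick a separable $C\in\mbox{Coz}(X)$ with $x\in\mbox{int}_{\beta X}\mbox{cl}_{\beta X}C$. The key step is to show that $C$ meets only countably many of the $X_i$. Indeed, let $D\subseteq C$ be a countable dense subset, and set $J=\{i\in I:X_i\cap C\neq\emptyset\}$. For each such $i$, the set $X_i\cap C$ is a nonempty open subset of $C$ (because $X_i$ is open in $X$), so it must meet $D$; since $D$ is countable and the $X_i$ are disjoint, $J$ is countable. Therefore $C\subseteq\bigcup_{i\in J}X_i$, and consequently
\[x\in\mbox{int}_{\beta X}\mbox{cl}_{\beta X}C\subseteq\mbox{cl}_{\beta X}C\subseteq\mbox{cl}_{\beta X}\Big(\bigcup_{i\in J}X_i\Big),\]
which gives the desired inclusion.

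The only non--routine step is the countability argument for $J$, which rests on the disjointness and openness of the $X_i$ combined with the separability of the cozero--set $C$; everything else uses facts already listed, especially that open--closed subspaces have open--closed $\beta X$--closures (needed to identify $\mbox{cl}_{\beta X}A$ with its interior).
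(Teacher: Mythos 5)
Your proof is correct and follows essentially the same route as the paper: the $\supseteq$ inclusion via open--closedness of $\bigcup_{i\in J}X_i$ and the fact that open--closed sets have open--closed $\beta X$--closures, and the $\subseteq$ inclusion by showing a separable cozero--set $C$ lies in countably many $X_i$. The only cosmetic difference is that the paper invokes Lindel\"{o}fness of $C$ to extract the countable subfamily, whereas you give the equivalent direct argument with a countable dense subset meeting each nonempty open piece $X_i\cap C$.
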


\begin{proof}
Denote
\[\mu X=\bigcup\Big\{\mbox{cl}_{\beta X}\Big(\bigcup_{i\in J}X_i\Big):J\subseteq I\mbox{ is countable}\Big\}.\]

To show that $\lambda X\subseteq\mu X$, let $C\in \mbox{Coz}(X)$ be separable. Then $C$ is Lindel\"{o}f and therefore $C\subseteq\bigcup_{i\in J}X_i$ for some countable $J\subseteq I$. Thus
\[\mbox{cl}_{\beta X}C\subseteq\mbox{cl}_{\beta X}\Big(\bigcup_{i\in J}X_i\Big).\]

Next, we show that $\mu X\subseteq\lambda X$. Let $J\subseteq I$ be countable. Then $D=\bigcup_{i\in J}X_i$ is a cozero--set of $X$, as it is open--closed in $X$, and it is separable. Also, since $D$ is open--closed in $X$, its closure $\mbox{cl}_{\beta X}D$ in $\beta X$ is open--closed in $\beta X$. Thus
\[\mbox{cl}_{\beta X}D=\mbox{int}_{\beta X} \mbox{cl}_{\beta X}D\subseteq\lambda X.\]
\end{proof}

Let $X$ be a locally compact non--compact space. It is known that $C_0(X)=C_{00}(X)$ if and only if every $\sigma$--compact subspace of $X$ is contained in a compact subspace of $X$. (See Problem 7G.2 of \cite{GJ}.) In particular, $C_0(X)=C_{00}(X)$ implies that $X$ is countably compact, and thus non--paracompact, as every countably compact paracompact space is compact. (See Theorem 5.1.20 of \cite{E}.) Below, it will be shown that $C_0(\lambda X)=C_{00}(\lambda X)$ for any locally separable metrizable space $X$.

\begin{lemma}\label{HGFS}
Let $X$ be a locally separable metrizable space. Then $\lambda X$ is locally compact and $C_0(\lambda X)=C_{00}(\lambda X)$. In particular, $\lambda X$ is countably compact.
\end{lemma}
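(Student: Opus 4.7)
The plan is to verify the three claims in sequence, with the middle claim being the substantive one.

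First, I would observe that $\lambda X$ is locally compact because, by its very definition in Definition \ref{HGA}, it is a union of open subsets of $\beta X$, hence open in the compact Hausdorff space $\beta X$; every open subspace of a compact Hausdorff space is locally compact.

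The main work is showing $C_0(\lambda X) = C_{00}(\lambda X)$. By the criterion recalled just before the lemma (Problem 7G.2 of \cite{GJ}), it suffices to prove that every $\sigma$-compact subspace of $\lambda X$ is contained in some compact subspace of $\lambda X$. Let $A = \bigcup_{n=1}^\infty K_n$ with each $K_n$ compact in $\lambda X$. By compactness and the definition of $\lambda X$, each $K_n$ is covered by finitely many sets of the form $\mbox{int}_{\beta X}\mbox{cl}_{\beta X} C$ with $C\in\mbox{Coz}(X)$ separable; each such separable cozero-set, being separable metrizable hence Lindel\"of, lies in $\bigcup_{i\in J}X_i$ for some countable $J\subseteq I$ (using the Alexandroff decomposition from subsection 1.2). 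Taking the countable union of these countably many countable index sets yields a single countable $J_0\subseteq I$ with $A\subseteq \mbox{cl}_{\beta X}(\bigcup_{i\in J_0}X_i)$. Now $\bigcup_{i\in J_0}X_i$ is open-closed in $X$, so by the second bullet of the Stone--\v{C}ech summary in 1.1 its closure in $\beta X$ is open-closed in $\beta X$, hence compact; and by Lemma \ref{PDS} this closure is contained in $\lambda X$. This exhibits the required compact superset of $A$ inside $\lambda X$.

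Finally, countable compactness of $\lambda X$ follows from the observation quoted just before the lemma: when $\lambda X$ is non-compact, $C_0(\lambda X)=C_{00}(\lambda X)$ forces it to be countably compact; and when $\lambda X$ is compact (which will happen exactly when $X$ is separable), countable compactness is automatic.

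I expect no serious obstacle; the only subtle point is to be careful that the set $\mbox{cl}_{\beta X}(\bigcup_{i\in J_0}X_i)$ we construct is both compact (which uses that open-closed subspaces of $X$ have open-closed closures in $\beta X$, so the closure is closed in the compact space $\beta X$) and contained in $\lambda X$ (which is exactly the content of Lemma \ref{PDS}). These two facts together are what make the argument work without assuming $X$ itself is locally compact.
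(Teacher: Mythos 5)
Your proposal is correct and follows essentially the same route as the paper: local compactness from openness in $\beta X$, the Gillman--Jerison criterion reducing the middle claim to covering $\sigma$-compact sets by compact ones, and a countable index set $J_0$ assembled from finite subcovers of each $K_n$, with Lemma \ref{PDS} guaranteeing that $\mbox{cl}_{\beta X}(\bigcup_{i\in J_0}X_i)$ lies in $\lambda X$. The only cosmetic difference is that you extract the countable index sets from the defining cover of $\lambda X$ by sets $\mbox{int}_{\beta X}\mbox{cl}_{\beta X}C$ (in effect re-running half of Lemma \ref{PDS}), whereas the paper covers each $T_n$ directly by the open sets $\mbox{cl}_{\beta X}(\bigcup_{i\in J}X_i)$ supplied by that lemma.
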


\begin{proof}
Note that $\lambda X$, being open in $\beta X$, is locally compact. To prove the lemma, it suffices to show that every $\sigma$--compact subspace of $\lambda X$ is contained in a compact subspace of $\lambda X$. Let $T$ be a $\sigma$--compact subspace of $\lambda X$. Then
\[T=\bigcup_{n=1}^\infty T_n,\]
where $T_n$ is compact for each positive integer $n$. Assume the representation of $X$ given in Part 1.2. Using Lemma \ref{PDS}, for each positive integer $n$ (by compactness of $T_n$ and the fact that $\mbox{cl}_{\beta X}(\bigcup_{i\in J}X_i)$ is open in $\beta X$, as $\bigcup_{i\in J}X_i$ is open--closed in $X$ for each countable $J\subseteq I$) there are countable $J_1^n,\ldots,J_{k_n}^n\subseteq I$ with
\[T_n\subseteq\mbox{cl}_{\beta X}\Big(\bigcup_{i\in J_1^n}X_i\Big)\cup\cdots\cup\mbox{cl}_{\beta X}\Big(\bigcup_{i\in J_{k_n}^n}X_i\Big).\]
If we now let
\[J=\bigcup_{n=1}^\infty(J_1^n\cup\cdots\cup J_{k_n}^n),\]
then $J$ is countable, and $\mbox{cl}_{\beta X}(\bigcup_{i\in J}X_i)$ is a compact subspace of $\lambda X$ containing $T$.
\end{proof}

Let $D$ be an uncountable discrete space. Let $E$ be the subspace of $\beta D\backslash D$ consisting of elements in the closure in $D$ of countable subsets of $D$. Then $E=\lambda D\backslash D$. (Observe that separable cozero--sets of $D$ are exactly countable subspace of $D$, and each subspace of $D$, being open--closed in $D$, has open closure in $\beta D$.) In \cite{W}, the author proves the existence of a continuous (2--valued) function $f:E\rightarrow[0,1]$ which is not continuously extendible over $\beta D\backslash D$. This, in particular, proves that $\lambda D$ is not normal. (To see this, suppose, to the contrary, that $\lambda D$ is normal. Note that $E$ is closed in $\lambda D$, as $D$, being locally compact, is open in $\beta D$. By the Tietze--Urysohn Extension Theorem, $f$ is extendible to a continuous bounded function over $\lambda D$, and thus over $\beta(\lambda D)=\beta D$. But this is not possible.) This fact will be used below to show that in general $\lambda X$ is non--normal for any locally separable non--separable metrizable space $X$. This, together with Lemma \ref{HGFS}, provides an example of a locally compact countably compact non--normal space $Y$ with $C_0(Y)=C_{00}(Y)$.

Observe that if $X$ is a space and $D\subseteq X$, then
\[U\cap\mbox{cl}_XD=\mbox{cl}_X(U\cap D)\]
for every open--closed subspace $U$ of $X$; this simple observation will be used below.

\begin{lemma}\label{OPS}
Let $X$ be a locally separable non--separable metrizable space. Then $\lambda X$ is non--normal.
\end{lemma}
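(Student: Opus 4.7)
My plan is to reduce the non-normality of $\lambda X$ to the already established non-normality of $\lambda D$ for an uncountable discrete space $D$, which was obtained in the paragraph immediately preceding the lemma via the result of \cite{W}. The strategy is to exhibit such a $\lambda D$ as a closed subspace of $\lambda X$ and then invoke the fact that every closed subspace of a normal space is normal.

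The discrete set $D$ arises by choosing one point from each piece of an Alexandroff decomposition. Using Part 1.2, write $X=\bigcup_{i\in I}X_i$ with the $X_i$ nonempty, pairwise disjoint, separable, and open--closed in $X$; since $X$ is non--separable, $|I|$ is uncountable. Pick $d_i\in X_i$ for each $i\in I$ and set $D=\{d_i:i\in I\}$. Because $X_i\cap D=\{d_i\}$ and each $X_i$ is open, $D$ is closed and discrete in $X$. Metrizability of $X$ implies normality, so the fourth listed property of $\beta X$ yields $\beta D=\mbox{cl}_{\beta X}D$, identifying $\beta D$ with a closed subset of $\beta X$.

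The technical core of the proof is the identification
\[\lambda D=\lambda X\cap\beta D,\]
where $\lambda D$ is computed inside $\beta D$. The inclusion $\lambda D\subseteq\lambda X$ is routine from Lemma \ref{PDS}, since any countable subset of $D$ is contained in some $\bigcup_{i\in J}X_i$ with $J\subseteq I$ countable. The reverse inclusion is the crux: for $p\in\lambda X\cap\beta D$, Lemma \ref{PDS} places $p$ in $\mbox{cl}_{\beta X}\bigcup_{i\in J}X_i$ for some countable $J\subseteq I$; since $\bigcup_{i\in J}X_i$ is open--closed in $X$, its $\beta X$-closure is open--closed in $\beta X$ by the second listed property, and applying the open--closed intersection observation stated just before the lemma collapses $\mbox{cl}_{\beta X}(\bigcup_{i\in J}X_i)\cap\mbox{cl}_{\beta X}D$ to $\mbox{cl}_{\beta X}\{d_i:i\in J\}$, which sits inside $\lambda D$.

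Given this identification, $\lambda D$ is closed in $\lambda X$; since $\lambda D$ fails to be normal and normality passes to closed subspaces, $\lambda X$ cannot be normal. The only delicate step is the inclusion $\lambda X\cap\beta D\subseteq\lambda D$; everything else is bookkeeping.
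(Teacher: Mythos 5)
Your proposal is correct and follows essentially the same route as the paper: both arguments pick one point from each piece of the Alexandroff decomposition to get an uncountable closed discrete $D$ with $\mbox{cl}_{\beta X}D=\beta D$, use Lemma \ref{PDS} together with the open--closed intersection observation to identify $\lambda X\cap\beta D$ with $\lambda D$, and conclude via the non-normality of $\lambda D$ and the heredity of normality to closed subspaces.
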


\begin{proof}
Assume the representation of $X$ given in Part 1.2. Choose some $x_i\in X_i$ for each $i\in I$. Consider the subspace
\[D=\{x_i:i\in I\}\]
of $X$. Then $D$ is a closed discrete subspace of $X$, and since $X$ is non--separable, it is uncountable. Suppose to the contrary that $\lambda X$ is normal. Using Lemma \ref{PDS}, the space
\[\lambda X\cap\mbox{cl}_{\beta X}D=\bigcup\Big\{\mbox{cl}_{\beta X}\Big(\bigcup_{i\in J}X_i\Big)\cap\mbox{cl}_{\beta X}D:J\subseteq I\mbox{ is countable}\Big\},\]
being closed in $\lambda X$, is normal. Now, let $J\subseteq I$ be countable. Since $\mbox{cl}_{\beta X}(\bigcup_{i\in J}X_i)$ is open in $\beta X$ (as $\bigcup_{i\in J}X_i$ is open--closed in $X$) we have
\begin{eqnarray*}
\mbox{cl}_{\beta X}\Big(\bigcup_{i\in J}X_i\Big)\cap\mbox{cl}_{\beta X}D&=&\mbox{cl}_{\beta X}\Big(\mbox{cl}_{\beta X}\Big(\bigcup_{i\in J}X_i\Big)\cap D\Big)\\&=&\mbox{cl}_{\beta X}\Big(\bigcup_{i\in J}X_i\cap D\Big)=\mbox{cl}_{\beta X}\big(\{x_i:i\in J\}\big).
\end{eqnarray*}
But $\mbox{cl}_{\beta X}D=\beta D$, as $D$ is closed in (the normal space) $X$. Therefore
\[\mbox{cl}_{\beta X}\big(\{x_i:i\in J\}\big)=\mbox{cl}_{\beta X}\big(\{x_i:i\in J\}\big)\cap\mbox{cl}_{\beta X}D=\mbox{cl}_{\beta D}\big(\{x_i:i\in J\}\big).\]
Thus
\[\lambda X\cap\mbox{cl}_{\beta X}D=\lambda D,\]
contradicting the fact that $\lambda D$ is not normal.
\end{proof}

A version of the classical Banach--Stone Theorem states that if $X$ and $Y$ are locally compact spaces, the  Banach algebras $C_0(X)$ and $C_0(Y)$ are isometrically isomorphic if and only if the spaces $X$ and $Y$ are homeomorphic (see Theorem 7.1 of \cite{Be}); this will be used in the proof of the following main  theorem.

\begin{theorem}\label{TRES}
Let $X$ be a locally separable metrizable space. Then $C_s(X)$ is a Banach algebra isometrically isomorphic to the Banach algebra $C_0(Y)$ for some unique (up to homeomorphism) locally compact space $Y$. The space $Y$ is countably compact, and if $X$ is non--separable, is non--normal. Furthermore, $C_0(Y)=C_{00}(Y)$.
\end{theorem}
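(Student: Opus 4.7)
The plan is to take $Y=\lambda X$ and to exhibit an isometric isomorphism
\[
\phi\colon C_s(X)\longrightarrow C_0(\lambda X),\qquad \phi(f)=f_\lambda,
\]
using the machinery already built up. That $C_s(X)$ is a Banach algebra follows immediately from Proposition \ref{YHP}, since a closed subalgebra of the Banach algebra $C_b(X)$ (with the supremum norm) is itself a Banach algebra. By Lemma \ref{JHG}, $X\subseteq\lambda X$, and of course $X$ is dense in $\lambda X$ since it is dense in $\beta X$; this density observation is the only tool needed for the algebraic part.

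I would first check that $\phi$ is well defined: this is exactly the implication $(1)\Rightarrow(2)$ of Lemma \ref{TES}. To see $\phi$ is an algebra homomorphism, note that for $f,g\in C_s(X)$, the two continuous functions $(f+g)_\lambda$ and $f_\lambda+g_\lambda$ on $\lambda X$ agree on the dense subspace $X$, hence everywhere; similarly for products and scalar multiples. The isometry property reduces to
\[
\|f_\lambda\|_\infty=\sup_{t\in\lambda X}|f_\beta(t)|=\sup_{x\in X}|f(x)|=\|f\|_\infty,
\]
again by density of $X$ in $\lambda X$ and continuity of $f_\beta$. For surjectivity, given $g\in C_0(\lambda X)$, set $f=g|X\in C_b(X)$; then $g$ and $f_\lambda$ are two continuous extensions of $f$ to $\lambda X$, so they coincide on the dense subspace $X$ and therefore on all of $\lambda X$. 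Thus $f_\lambda=g\in C_0(\lambda X)$, and the implication $(2)\Rightarrow(1)$ of Lemma \ref{TES} gives $f\in C_s(X)$ with $\phi(f)=g$. Finally, $\phi$ is injective because if $f_\lambda\equiv 0$ then $f=f_\lambda|X\equiv 0$.

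The remaining structural assertions about $Y=\lambda X$ are already recorded: local compactness, countable compactness, and the identity $C_0(\lambda X)=C_{00}(\lambda X)$ are furnished by Lemma \ref{HGFS}, while non--normality under the non--separability assumption on $X$ is Lemma \ref{OPS}. Uniqueness of $Y$ up to homeomorphism among locally compact spaces follows from the version of the Banach--Stone theorem cited just before the statement: any other locally compact $Y'$ with $C_0(Y')$ isometrically isomorphic to $C_s(X)\cong C_0(\lambda X)$ must satisfy $Y'\cong\lambda X$. I do not anticipate a substantive obstacle; the content of the theorem has essentially been distilled into the preceding lemmas, and the only mildly delicate point is the routine density/uniqueness argument used simultaneously to establish that $\phi$ is a homomorphism, an isometry, injective, and surjective.
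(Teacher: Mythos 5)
Your proposal is correct and follows essentially the same route as the paper: take $Y=\lambda X$, use Lemma \ref{TES} for well--definedness and surjectivity of $f\mapsto f_\lambda$, density of $X$ in $\lambda X$ for the homomorphism/isometry/injectivity claims, Lemmas \ref{HGFS} and \ref{OPS} for the structural properties of $Y$, and the Banach--Stone theorem for uniqueness. You merely spell out the routine density arguments in more detail than the paper does.
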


\begin{proof}
Let $Y=\lambda X$ and define $\psi:C_s(X)\rightarrow C_0(Y)$ by $\psi(f)=f_\lambda$ for any $f\in C_s(X)$. By Lemma \ref{TES} the function $\psi$ is well--defined. It is clear that $\psi$ is an isometric homomorphism and $\psi$ is injective. (Note that $X\subseteq Y$ by Lemma \ref{JHG}.)  Let $g\in C_0(Y)$. Then $(g|X)_\lambda=g$ and thus  $g|X\in C_s(X)$ by Lemma \ref{TES}. Now $\psi(g|X)=g$. This shows that $\psi$ is surjective.
Note that by Lemma \ref{HGFS} the space $Y$ is locally compact. The uniqueness of $Y$ follows, as for any locally compact space $T$ the Banach algebra $C_0(T)$ determines the topology of $T$. Lemmas \ref{HGFS} and \ref{OPS} now complete the proof.
\end{proof}

\begin{remark}
Theorem \ref{TRES} holds true if one replaces ``locally separable'' and  ``the Banach algebra of continuous bounded functions with separable support'', respectively, by ``locally Lindel\"{o}f (locally second countable, respectively)'' and  ``the Banach algebra of continuous bounded functions with Lindel\"{o}f (second countable, respectively) support''.
\end{remark}

\begin{remark}
By a version of the Banach--Stone Theorem, if $X$ and $Y$ are locally compact spaces, the  rings $C_0(X)$ and $C_0(Y)$ are isomorphic if and only if the spaces $X$ and $Y$ are homeomorphic. (See \cite{A}.) Thus, Theorem \ref{TRES} (and its subsequent results) holds true if  one replaces ``Banach algebra'' by  ``ring''.
\end{remark}

\section{The dimension of $C_s(X)$}

The Tarski Theorem states that for any infinite set $I$, there is a collection $\mathscr{A}$ of cardinality $|I|^{\aleph_0}$ consisting of countable infinite subsets of $I$, such that the intersection of any two distinct elements of $\mathscr{A}$ is finite (see \cite{Ho}); this will be used in the following.

Note that the collection of all subsets of cardinality at most $\mathfrak{m}$ in a set of cardinality $\mathfrak{n}\geq\mathfrak{m}$ has cardinality at most $\mathfrak{n}^\mathfrak{m}$.

\begin{theorem}\label{RTS}
Let $X$ be a locally separable non--separable metrizable space. Then
\[\dim C_s(X)=d(X)^{\aleph_0}.\]
\end{theorem}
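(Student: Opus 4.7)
The strategy is to squeeze $\dim C_s(X)$ between matching upper and lower bounds of $d(X)^{\aleph_0}$, using throughout the Alexandroff decomposition $X=\bigcup_{i\in I}X_i$ from Part~1.2, in which each $X_i$ is a separable open--closed subspace and $|I|=d(X)\geq\aleph_1$ (since $X$ is non--separable).

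For the upper bound, observe that if $f\in C_s(X)$ then $\mbox{Coz}(f)$ is separable, hence Lindel\"{o}f, so it meets only countably many of the pairwise disjoint sets $X_i$; therefore there is a countable $J_f\subseteq I$ with $\mbox{supp}(f)\subseteq\bigcup_{i\in J_f}X_i$. For each countable $J\subseteq I$ set $V_J=\{f\in C_s(X):\mbox{supp}(f)\subseteq\bigcup_{i\in J}X_i\}$; then $V_J$ is a linear subspace of $C_s(X)$ and $C_s(X)=\sum_J V_J$. Restriction to $\bigcup_{i\in J}X_i$ embeds $V_J$ injectively into the space of bounded continuous functions on the separable metrizable space $\bigcup_{i\in J}X_i$, and this latter space has cardinality at most $(2^{\aleph_0})^{\aleph_0}=2^{\aleph_0}$ (a continuous function being determined by its values on a countable dense subset), so $\dim V_J\leq|V_J|\leq 2^{\aleph_0}$. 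The remark preceding the theorem supplies at most $|I|^{\aleph_0}$ countable subsets of $I$, whence
\[\dim C_s(X)\leq\sum_J\dim V_J\leq|I|^{\aleph_0}\cdot 2^{\aleph_0}=d(X)^{\aleph_0}.\]

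For the lower bound I would apply Tarski's Theorem to the index set $I$ to obtain a family $\mathscr{A}$ of cardinality $|I|^{\aleph_0}=d(X)^{\aleph_0}$ consisting of countably infinite subsets of $I$ with pairwise finite intersection. For each $A\in\mathscr{A}$ the set $Y_A=\bigcup_{i\in A}X_i$ is open--closed in $X$ and separable, so $\chi_{Y_A}\in C_s(X)$. These characteristic functions are linearly independent: in any finite vanishing relation $\sum_{j=1}^n c_j\chi_{Y_{A_j}}=0$ with distinct $A_1,\ldots,A_n\in\mathscr{A}$, the infinite set $A_j$ has finite intersection with each $A_k$ for $k\neq j$, so one can choose $i_j\in A_j\setminus\bigcup_{k\neq j}A_k$, and then evaluating at any point of $X_{i_j}$ forces $c_j=0$. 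Hence $\dim C_s(X)\geq|\mathscr{A}|=d(X)^{\aleph_0}$.

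The main obstacle I expect is the linear--independence step of the lower bound: the naive family $\{\chi_{\bigcup_{i\in J}X_i}:J\subseteq I\mbox{ countable}\}$ already has the correct cardinality $d(X)^{\aleph_0}$ but exhibits many linear relations, and it is precisely the almost--disjointness delivered by Tarski's Theorem that lets one exhibit, for each summand, an index $i_j\in A_j$ lying in no other $A_k$ and thereby decouple the coefficients.
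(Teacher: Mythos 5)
Your proposal is correct and follows essentially the same route as the paper: Tarski's almost disjoint family of countable subsets of $I$ yields the linearly independent characteristic functions for the lower bound, and the observation that every support lies in a countable subunion $\bigcup_{i\in J}X_i$, combined with the count of at most $|I|^{\aleph_0}$ countable $J$ and at most $2^{\aleph_0}$ functions on each separable piece, yields the upper bound. The only cosmetic difference is that the paper bounds $\dim C_s(X)$ by the cardinality $|C_s(X)|$ directly rather than summing $\dim V_J$ over $J$.
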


\begin{proof}
Assume the representation of $X$ given in Part 1.2. Note that $I$ is infinite, as $X$ is non--separable, and $d(X)=|I|$.

Let $\mathscr{A}$ be a collection of cardinality $|I|^{\aleph_0}$ consisting of countable infinite subsets of $I$, such that the intersection of any two distinct elements of $\mathscr{A}$ is finite. Define
\[f_A=\chi_{(\bigcup_{i\in A}X_i)}\]
for any $A\in\mathscr{A}$. Then, no element of
\[\mathscr{F}=\{f_A:A\in\mathscr{A}\}\]
is a linear combination of other elements (since each element of $\mathscr{A}$ is infinite and each pair of distinct elements of $\mathscr{A}$ has finite intersection). Observe that $\mathscr{F}$ is of cardinality $|\mathscr{A}|$. This shows that
\[\dim C_s(X)\geq|\mathscr{A}|=|I|^{\aleph_0}=d(X)^{\aleph_0}.\]

To simplify the notation, denote
\[H_J=\bigcup_{i\in J}X_i\]
for any $J\subseteq I$. If $f\in C_s(X)$, then $\mbox{supp}(f)$ (being separable) is Lindel\"{o}f, and thus $\mbox{supp}(f)\subseteq H_J$, where $J\subseteq I$ is countable; therefore, it may be assumed that $f\in C_b(H_J)$. Conversely, if $J\subseteq I$ is countable, then each element of $C_b(H_J)$ can be extended trivially to an element of $C_s(X)$ (by defining it to be identically $0$ elsewhere). Thus $C_s(X)$ may be viewed as the union of all $C_b(H_J)$, where $J$ runs over all countable subsets of $I$. Note that if $J\subseteq I$ is countable, then $H_J$ is separable; thus any element of $C_b(H_J)$ is determined by its value on a countable set. This implies that for each countable $J\subseteq I$, the set $C_b(H_J)$ is of cardinality at most $2^{\aleph_0}$. Note that there are at most $|I|^{\aleph_0}$ countable $J\subseteq I$. Now
\begin{eqnarray*}
\dim C_s(X)\leq\big|C_s(X)\big|&\leq& \Big|\bigcup\big\{C_b(H_J):J\subseteq I\mbox{ is countable}\big\}\Big|\\&\leq& 2^{\aleph_0}\cdot|I|^{\aleph_0}=|I|^{\aleph_0}=d(X)^{\aleph_0},
\end{eqnarray*}
which together with the first part proves the theorem.
\end{proof}

\section{The spectrum of the $\mbox{C}^*$--algebra $C_s(X)$}

In this section the underlying field of scalars is $\mathbb{C}$. Let $A$ be a commutative Banach algebra. A non--zero algebra homomorphism $\phi:A\rightarrow\mathbb{C}$ is called a {\em character} of $A$; the set of all characters of $A$ is denoted by $\Phi_A$. If $A$ is a $\mbox{C}^*$--algebra then every algebra homomorphism $\phi:A\rightarrow\mathbb{C}$ is a $*$--homomorphism, and thus $\Phi_A$ coincides with the spectrum of $A$. Every character on $A$ is continuous, and therefore $\Phi_A$ is a subset of the space $A^*$ of continuous linear functionals on $A$; moreover, when equipped with the relative weak$^*$ topology, $\Phi_A$ turns out to be locally compact. The space $\Phi_A$ is compact (in the topology just defined) if and only if the algebra $A$ has an identity element. Given $a\in A$, let the function $\hat{a}:\Phi_A\rightarrow\mathbb{C}$ be defined by $\hat{a}(\phi)=\phi(a)$ for any $\phi\in\Phi_A$. The map $a\mapsto\hat{a}$ defines a norm--decreasing, unit--preserving algebra homomorphism from $A$ to $C_0(\Phi_A)$. This homomorphism is called the {\em Gelfand representation} of $A$. In general the representation is neither injective nor surjective. The commutative Gelfand--Naimark Theorem states that if $A$ is a commutative $\mbox{C}^*$--algebra then the Gelfand map is an isometric $*$--isomorphism.

\begin{theorem}\label{SUS}
Let $X$ be a locally separable non--separable metrizable space. Then the spectrum of $C_s(X)$ is homeomorphic to $\lambda X$, and thus is locally compact, countably compact and non--normal.
\end{theorem}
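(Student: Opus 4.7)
The plan is to assemble Theorem \ref{TRES} with the commutative Gelfand--Naimark Theorem recalled at the start of Section 4, and then to transport the topological properties via the resulting homeomorphism. Working over $\mathbb{C}$, the first step is to upgrade the isometric isomorphism of Theorem \ref{TRES} to a $*$-isomorphism. This is automatic because the map $\psi(f)=f_\lambda$ is built from the continuous extension to $\beta X$ and restriction to $\lambda X$, both of which commute with complex conjugation; equivalently, $C_s(X)$ is closed under conjugation (since $\mathrm{supp}(\overline{f})=\mathrm{supp}(f)$), so it is a commutative $\mathrm{C}^*$-subalgebra of $C_b(X)$ and any isometric algebra isomorphism onto $C_0(\lambda X)$ is automatically $*$-preserving.

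Next, apply the commutative Gelfand--Naimark Theorem to the commutative $\mathrm{C}^*$-algebra $C_s(X)$: the Gelfand map yields an isometric $*$-isomorphism $C_s(X)\cong C_0(\Phi_{C_s(X)})$. Chaining this with $C_s(X)\cong C_0(\lambda X)$ from Theorem \ref{TRES} gives an isometric Banach algebra isomorphism
\[
C_0\bigl(\Phi_{C_s(X)}\bigr)\cong C_0(\lambda X).
\]
Both underlying spaces are locally compact Hausdorff: $\lambda X$ by Lemma \ref{HGFS}, and $\Phi_{C_s(X)}$ by the generalities recorded at the beginning of Section 4 (it is a locally compact Hausdorff subspace of the weak$^*$ dual, and is non-compact precisely because $C_s(X)$ is non-unital, as established in Proposition \ref{YHP}). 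The version of the Banach--Stone Theorem cited just before Theorem \ref{TRES} therefore delivers a homeomorphism $\Phi_{C_s(X)}\cong\lambda X$.

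With this homeomorphism in hand, the additional topological properties are immediate: local compactness and countable compactness of $\lambda X$ are supplied by Lemma \ref{HGFS}, and non-normality by Lemma \ref{OPS}, whose hypothesis (non-separability of $X$) is precisely that of the theorem. I do not foresee a real obstacle, since every needed ingredient has been established earlier in the paper; the only point requiring a brief verification is the compatibility of $\psi$ with the involution, which is the observation noted in the first paragraph.
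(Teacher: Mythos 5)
Your proposal is correct and follows essentially the same route as the paper: apply Gelfand--Naimark to the commutative $\mbox{C}^*$--algebra $C_s(X)$, chain with the isomorphism $C_s(X)\cong C_0(\lambda X)$ from Theorem \ref{TRES}, and conclude via the Banach--Stone Theorem together with Lemmas \ref{HGFS} and \ref{OPS}. Your explicit check that $C_s(X)$ is closed under conjugation (so that Gelfand--Naimark applies) is a detail the paper leaves implicit, but it is not a different argument.
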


\begin{proof}
By the commutative Gelfand--Naimark Theorem the $\mbox{C}^*$--algebra $C_s(X)$ is isometrically $*$--isomorphic to $C_0(S)$, where $S$ is the spectrum of $C_s(X)$. On the other hand, $C_s(X)$ is isometrically isomorphic (as a Banach algebra) to $C_0(\lambda X)$, by (the proof of) Theorem \ref{TRES}. This implies that $C_0(S)$ is isometrically isomorphic to $C_0(\lambda X)$, which by the Banach--Stone Theorem (and Lemmas \ref{HGFS} and \ref{OPS}) gives the result.
\end{proof}

\section{The Banach algebra $C_\sigma(X)$ of continuous bounded functions with $\sigma$--compact support}

Recall that in any locally compact space, $\sigma$--compactness coincides with being Lindel\"{o}f (see Problem 3.8.C of \cite{E}); thus, in any locally compact metrizable space, $\sigma$--compactness and separability coincide.

The following variation of Theorem \ref{TRES} might be of some interest; results dual to Theorems \ref{RTS} and \ref{SUS} may be stated and proved analogously.

\begin{definition}
For any metrizable  space $X$ let
\[C_\sigma(X)=\big\{f\in C_b(X):\mbox{supp}(f)\mbox{ is $\sigma$--compact}\big\}.\]
\end{definition}

\begin{theorem}\label{YES}
Let $X$ be a locally compact metrizable space. Then $C_\sigma(X)$ is a Banach algebra isometrically isomorphic to the Banach algebra $C_0(Y)$ for some unique (up to homeomorphism) locally compact space $Y$. The space $Y$ is countably compact, and if $X$ is non--$\sigma$--compact, is non--normal. Furthermore, $C_0(Y)=C_{00}(Y)$.
\end{theorem}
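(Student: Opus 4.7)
The plan is to reduce Theorem \ref{YES} to Theorem \ref{TRES} by showing that under the hypothesis of local compactness and metrizability, the classes $C_\sigma(X)$ and $C_s(X)$ in fact coincide. First I would observe that a locally compact metrizable space is automatically locally separable, since each point has a compact neighborhood and compact metrizable spaces are second countable, hence separable. So the hypotheses of Theorem \ref{TRES} are satisfied.

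Next, the key equality $C_\sigma(X)=C_s(X)$ should be established. For any $f\in C_b(X)$, the support $\mathrm{supp}(f)$ is closed in $X$, and a closed subspace of a locally compact metrizable space is again locally compact and metrizable. By the remark preceding the definition (invoking Problem 3.8.C of \cite{E} together with the coincidence of Lindel\"ofness and separability in metrizable spaces), $\sigma$-compactness and separability agree on such a subspace. Hence $\mathrm{supp}(f)$ is $\sigma$-compact if and only if it is separable, which gives $C_\sigma(X)=C_s(X)$ as Banach subalgebras of $C_b(X)$.

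With this identification, set $Y=\lambda X$. Theorem \ref{TRES} applied to $X$ immediately yields that $C_\sigma(X)=C_s(X)$ is isometrically isomorphic to $C_0(Y)$ via $f\mapsto f_\lambda$, that $Y$ is locally compact and countably compact, that $C_0(Y)=C_{00}(Y)$, and that $Y$ is unique up to homeomorphism (by the Banach--Stone Theorem as used in the proof of Theorem \ref{TRES}).

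The only remaining point is the non-normality clause. Here I would note that for $X$ locally compact metrizable, $X$ itself is $\sigma$-compact precisely when it is separable (again by the remark quoted above, applied to $X$). Thus ``$X$ non-$\sigma$-compact'' is equivalent to ``$X$ non-separable,'' and in that case Lemma \ref{OPS} applies to $\lambda X=Y$ and gives non-normality. No substantive new argument is required; the main (very minor) obstacle is simply to make sure the equivalence of $\sigma$-compactness and separability is invoked at the level of the closed subspace $\mathrm{supp}(f)$ rather than at the level of $X$ when identifying $C_\sigma(X)$ with $C_s(X)$.
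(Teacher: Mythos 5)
Your proposal is correct and follows exactly the paper's route: the paper's proof is the one-line reduction ``this follows from Theorem \ref{TRES} and the fact that $C_\sigma(X)=C_s(X)$,'' and you have simply supplied the (correct) details of why local compactness plus metrizability gives local separability and why $\sigma$--compactness and separability agree on the closed subspace $\mathrm{supp}(f)$.
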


\begin{proof}
This follows from Theorem \ref{TRES} and the fact that $C_\sigma(X)=C_s(X)$.
\end{proof}

\subsection*{Acknowledgement.} The author wishes to thank the anonymous referee for his/her careful reading of the manuscript and his/her useful suggestions and comments.

\bibliographystyle{amsplain}

\end{document}